\numberwithin{equation}{section}
\numberwithin{figure}{section}
\newcommand{\lyxaddress}[1]{
\par {\raggedright #1
\vspace{1.4em}
\noindent\par}
}
\theoremstyle{plain}
\newtheorem{thm}{\protect\theoremname}
  \theoremstyle{remark}
  \newtheorem*{rem*}{\protect\remarkname}
  \theoremstyle{plain}
  \newtheorem{cor}[thm]{\protect\corollaryname}
  \theoremstyle{plain}
  \newtheorem{lem}[thm]{\protect\lemmaname}
\DeclareMathOperator*{\esssup}{ess\,sup}
\newcommand\blfootnote[1]{%
  \begingroup
  \renewcommand\thefootnote{}\footnote{#1}%
  \addtocounter{footnote}{-1}%
  \endgroup
}
\theoremstyle{remark}
\renewenvironment{rem*}
  {\pushQED{\qed}\remxx}
  {\popQED\endremxx}
\theoremstyle{plain}
\renewenvironment{lem}
  {\pushQED{\qed}\lemx}
  {\popQED\endlemx}
\theoremstyle{plain}
\renewenvironment{cor}
  {\pushQED{\qed}\corx}
  {\popQED\endcorx}
\theoremstyle{definition}
\renewenvironment{thm}
  {\pushQED{\qed}\thmx}
  {\popQED\endthmx}
  \providecommand{\corollaryname}{Corollary}
  \providecommand{\lemmaname}{Lemma}
  \providecommand{\remarkname}{Remark}
\providecommand{\theoremname}{Theorem}
\begin{document}

\title{A general version of Price's theorem}

\subtitle{A tool for bounding the expectation\\
of nonlinear functions of Gaussian random vectors}

\author{Felix Voigtlaender}

\date{\vspace{-7ex}}
\maketitle
\selectlanguage{ngerman}%

\lyxaddress{\begin{center}
{\scriptsize Katholische Universität Eichstätt-Ingolstadt, Lehrstuhl
Wissenschaftliches Rechnen,\\
Ostenstra{\ss}e 26, 85072 Eichstätt, Germany\foreignlanguage{english}{}\\
\foreignlanguage{english}{\vspace{0.2cm}and\vspace{0.3cm}}\\
Technische Universität Berlin, Institut für Mathematik\\
Stra{\ss}e des 17.\ Juni 136, 10623 Berlin, Germany\\
\href{mailto:felix@voigtlaender.xyz}{felix@voigtlaender.xyz}}
\par\end{center}}
\selectlanguage{english}%

\begin{abstract}
Assume that $X_{\Sigma} \in \mathbb{R}^{n}$ is a centered random vector following
a multivariate normal distribution with positive definite covariance matrix $\Sigma$.
Let $g : \mathbb{R}^{n} \to \mathbb{C}$
be measurable and of moderate growth, say $|g(x)| \lesssim (1 + |x|)^{N}$.
We show that the map $\Sigma \mapsto \mathbb{E}[g(X_{\Sigma})]$
is smooth, and we derive convenient expressions for its partial derivatives,
in terms of certain expectations $\mathbb{E}[(\partial^{\alpha}g)(X_{\Sigma})]$
of partial (distributional) derivatives of $g$.
As we discuss, this result can be used to derive bounds for the expectation
$\mathbb{E}[g(X_{\Sigma})]$ of a nonlinear function $g(X_{\Sigma})$ of a Gaussian
random vector $X_{\Sigma}$ with possibly correlated entries.

For the case when $g\left(x\right) = g_{1}(x_{1}) \cdots g_{n}(x_{n})$
has tensor-product structure, the above result is known in the engineering
literature as \emph{Price's theorem}, originally published in 1958.
For dimension $n = 2$, it was generalized in 1964 by McMahon to the
general case $g : \mathbb{R}^{2} \to \mathbb{C}$.
Our contribution is to unify these results, and to give a mathematically fully rigorous proof.
Precisely, we consider a normally distributed random vector
$X_{\Sigma} \in \mathbb{R}^{n}$ of arbitrary dimension $n \in \mathbb{N}$,
and we allow the nonlinearity $g$ to be a general tempered distribution.
To this end, we replace the expectation $\mathbb{E}\left[g(X_{\Sigma})\right]$
by the dual pairing $\left\langle g,\,\phi_{\Sigma}\right\rangle_{\mathcal{S}',\mathcal{S}}$,
where $\phi_{\Sigma}$ denotes the probability density function of $X_{\Sigma}$.
\end{abstract}

\newcommand{\essup}{\esssup}
\newcommand{\with}{\,\middle|\,}
\newcommand{\R}{\mathbb{R}}
\newcommand{\CC}{\mathbb{C}}
\newcommand{\Compl}{\mathbb{C}}
\newcommand{\N}{\mathbb{N}}
\newcommand{\Z}{\mathbb{Z}}
\newcommand{\expectation}{\mathbb{E}}
\newcommand{\Schwartz}{\mathcal{S}}
\newcommand{\Fourier}{\mathcal{F}}
\newcommand{\Indicator}{{\mathds{1}}}
\newcommand{\identity}{\operatorname{id}}
\newcommand{\sym}{\mathrm{Sym}_{n}}
\newcommand{\pos}{\mathrm{Sym}_{n}^{+}}

\noindent
\textbf{Keywords}:
Normal distribution;
Gaussian random variables;
Nonlinear functions of Gaussian random vectors;
Expectation;
Price's theorem

\vspace{0.1cm}

\noindent
\textbf{AMS subject classification}:
60G15, 62H20%
%
\blfootnote{\hspace{4pt}
The author acknowledges support by the European Commission-Project DEDALE (contract no.~665044)
within the H2020 Framework.
The author is grateful to Martin Genzel for bringing up the topic discussed in this paper,
and to Ali Hashemi for pointing out the original paper by Price.
Last but not least, the author would like to thank the anonymous referees for valuable suggestions
that led to an improved presentation and for suggesting the references
\cite{BrownGeneralizedFormOfPricesTheorem,PapoulisCommentOnAnExtensionOfPrice}.}

\section{Introduction}
\label{sec:Introduction}

In this introduction, we first present a precise formulation of our version of Price's theorem,
the proof of which we defer to Section~\ref{sec:Proof}.
We then briefly discuss the relevance of this theorem:
In a nutshell, it is a useful tool for estimating the expectation of a nonlinear function
$g(X_{\Sigma})$ of a Gaussian random vector $X_{\Sigma} \in \R^{n}$ with possibly correlated entries.
In Section~\ref{sec:MartinExample}, we consider a specific example application which illustrates this.
The relation of our result to the classical versions \cite{PriceTheorem,PriceTheoremExtension}
of Price's theorem is discussed in Section~\ref{sec:Comparison}.

\subsection{Our version of Price's theorem}

Let us denote by $\sym := \left\{ A \in \R^{n \times n} \,:\, A^{T} = A \right\}$
the set of symmetric matrices, and by
\[
  \pos := \left\{
            A \in \sym
            \,:\,
            \forall x \in \R^{n} \setminus \{ 0 \} : \,
              \langle x, A x \rangle > 0
          \right\}
\]
the set of (symmetric) positive definite matrices, where we write
$\langle x, y\rangle :=x^{T} y$ for the standard scalar product of $x, y \in \R^{n}$
and $|x| := \sqrt{\langle x,x \rangle}$ for the usual Euclidean norm.
For $\Sigma \in \pos$, let
\begin{equation}
  \phi_{\Sigma} :
  \R^{n}\to(0, \infty),
  x \mapsto \big[ (2\pi)^{n} \cdot \det \Sigma \big]^{-\frac{1}{2}}
            \cdot e^{-\frac{1}{2}\langle x, \Sigma^{-1} x\rangle },
  \label{eq:NormalDensity}
\end{equation}
and note that $\phi_{\Sigma}$ is the density function of a centered random vector
$X_{\Sigma}\in\R^{n}$ which follows a joint normal distribution with covariance matrix $\Sigma$---%
that is, $X_{\Sigma} \sim N(0,\Sigma)$; see for instance
\mbox{\cite[Chapter~5, Theorem~5.1]{GutIntermediateCourseInProbability}}.

Let us briefly recall the notion of Schwartz functions and tempered distributions,
which will play an important role in what follows.
First, with $\N = \{ 1,2,\dots \}$ and $\N_0 = \{0\} \cup \N$, any $\alpha \in \N_0^n$
will be called a \emph{multiindex}, and we write $|\alpha| = \alpha_1 + \dots + \alpha_n$
as well as
\(
  \partial^\alpha
  = \frac{\partial^{\alpha_1}}{\partial x_1^{\alpha_1}}
    \cdots \frac{\partial^{\alpha_n}}{\partial x_n^{\alpha_n}},
\)
and $z^{\alpha} = z_1^{\alpha_1} \cdots z_n^{\alpha_n}$ for $z \in \CC^n$.
Finally, given $\alpha,\beta \in \N_0^n$, we write $\beta \leq \alpha$ if $\beta_j \leq \alpha_j$
for all $j \in \{ 1,\dots,n \}$.
With this notation, it is not hard to see that the density function $\phi_\Sigma$ from above
belongs to the \emph{Schwartz class}
\[
  \Schwartz(\R^{n})
  = \left\{
      g \in C^{\infty}(\R^{n};\Compl)
      \,:\,
      \forall \: \alpha \in \N_{0}^{n} \:
        \forall \: N \in \N \:
          \exists \: C > 0 \:
            \forall \: x \in \R^{n}: \:
              | \partial^{\alpha} g(x)| \leq C \cdot (1 \! + \! |x| )^{-N}
    \right\}
\]
of smooth, rapidly decaying functions; see for instance \cite[Chapter~8]{FollandRA}
for more details on this space.
In fact,
\(
  \phi_{\Sigma}(x)
  = c_{\Sigma} \cdot e^{-\frac{1}{2} \langle \Sigma^{-1/2}x, \Sigma^{-1/2}x \rangle}
  =c_{\Sigma} \cdot \Phi(\Sigma^{-1/2} x)
\),
where $\Phi$ is the usual Gaussian function $\Phi(x) = e^{-\frac{1}{2} |x|^{2}}$,
which is well-known to belong to $\Schwartz(\R^{n})$.

The space $\Schwartz'(\R^n)$ of \emph{tempered distributions} consists of all linear functionals
$g : \Schwartz(\R^n) \to \CC$ which are continuous with respect to the usual topology
on $\Schwartz(\R^n)$; see \cite[Sections 8.1 and 9.2]{FollandRA} for the details.
Since $\phi_\Sigma \in \Schwartz(\R^n)$, given any tempered distribution $g \in \Schwartz' (\R^{n})$,
the function
\begin{equation}
  \Phi_{g}:
  \pos \to \Compl,
  \Sigma \mapsto \langle g, \, \phi_{\Sigma} \rangle_{\Schwartz',\Schwartz}
  \label{eq:PriceFunction}
\end{equation}
is well-defined, where $\langle \cdot, \cdot \rangle_{\Schwartz',\Schwartz}$
denotes the (bilinear) dual pairing between $\Schwartz'(\R^{n})$ and $\Schwartz(\R^{n})$.
As an important special case, note that if $g : \R^{n} \to \Compl$ is measurable and
\emph{of moderate growth}, in the sense that $x \mapsto (1 + |x|)^{-N} \cdot g(x) \in L^{1} (\R^{n})$
for some $N\in\N$, then
\begin{equation}
  \Phi_{g}(\Sigma)
  = \mathbb{E} \big[ g(X_{\Sigma}) \big]
  \label{eq:PriceFunctionAsExpectation}
\end{equation}
is just the expectation of $g(X_{\Sigma})$, where $X_{\Sigma} \sim N(0,\Sigma)$.
Here, we identify as usual the function $g$ with the tempered distribution
$\Schwartz(\R^n) \to \CC, \varphi \mapsto \int g(x) \varphi(x) \, d x$.

The main goal of this note is to show for each $g \in \Schwartz'(\R^n)$ that the function $\Phi_{g}$
is smooth, and to derive an explicit formula for its partial derivatives.
Thus, at least in the case of Equation~\eqref{eq:PriceFunctionAsExpectation},
our goal is to \emph{calculate the partial derivatives of the expectation
of a nonlinear function $g$ of a Gaussian random vector $X_{\Sigma} \sim N(0,\Sigma)$,
as a function of the covariance matrix $\Sigma$ of the vector $X_{\Sigma}$}.

In order to achieve a convenient statement of this result, we first
introduce a bit more notation:
Write $\underline{n} := \left\{ 1,\dots,n\right\} $, and let
\begin{equation}
  I := \left\{ (i,j) \in \underline{n} \times \underline{n} \,:\, i \leq j \right\} ,
  \quad
  I_{\shortparallel}:=\left\{ (i,i) \,:\, i \in \underline{n}\right\} ,
  \quad
  I_{<} := \left\{ (i,j) \in \underline{n} \times \underline{n} \,:\, i < j \right\} ,
  \label{eq:IndexSet}
\end{equation}
so that $I = I_{\shortparallel} \uplus I_{<}$.
Since for $n > 1$, the sets $\sym$ and $\pos$ have empty interior in $\R^{n\times n}$
(because they only consist of symmetric matrices), it does not make
sense to talk about partial derivatives of a function $\Phi : \pos \to \Compl$,
unless one interprets $\pos$ as an open subset of the vector space $\sym$,
rather than of $\R^{n\times n}$.
As a means of fixing a coordinate system on $\sym$, we therefore
parameterize the set of symmetric matrices by their ``upper half'';
precisely, we consider the following isomorphism between $\R^{I}$ and $\sym$:
\begin{equation}
  \Omega : \R^{I} \to \sym ,\ :
  \left( A_{i,j} \right)_{1\leq i\leq j\leq n}
  \mapsto \sum_{i\leq j} A_{i,j} E_{i,j} + \sum_{i>j} A_{j,i} E_{i,j}.
  \label{eq:Parametrization}
\end{equation}
Here, we denote by $(E_{i,j})_{i,j\in\underline{n}}\vphantom{\sum_{j}}$
the standard basis of $\R^{n\times n}$, meaning that
$(E_{i,j})_{k,\ell} = \delta_{i,k} \cdot \delta_{j,\ell}$
with the usual Dirac delta $\delta_{i,k}$.
Below, instead of calculating the partial derivatives of $\Phi_{g}$, we will consider the function
$\Phi_{g}\circ\Omega|_{U}$, where $U:=\Omega^{-1}\left(\pos\right)\subset\R^{I}$ is open.

In order to achieve a concise formulation of our version of Price's theorem,
we need two non-standard notions regarding multiindices
$\beta = \big( \beta(i,j) \big)_{(i,j) \in I} \in \N_0^I$.
Namely,
we define the \textbf{flattened version} of $\beta$ as
\begin{equation}
  \beta_{\flat} := \sum_{(i,j) \in I} \beta(i,j) \, (e_{i} + e_{j}) \in \N_{0}^{n}
  \quad \text{ with the standard basis } (e_{1},\dots,e_{n}) \text{ of } \R^{n},
  \label{eq:FlattenedMultiindex}
\end{equation}
and in addition to $|\beta| = \sum_{(i,j) \in I} \beta (i,j)$, we will also use
\begin{equation}
  |\beta|_{\shortparallel}
  := \sum_{(i,j) \in I_{\shortparallel}}
       \beta(i,j)
  = \sum_{i \in \underline{n}} \,
      \beta(i,i).
  \label{eq:CentralAbsoluteValue}
\end{equation}

With this notation, our main result reads as follows:

\begin{thm}[Generalized version of Price's theorem]\label{thm:PriceTheorem}
Let $g \in \Schwartz'(\R^{n})$ be arbitrary.
Then the function $\Phi_{g} \circ \Omega|_{U} : U \to \CC$ is smooth
and its partial derivatives are given by
\begin{equation}
  \partial^{\beta} \left(\Phi_{g} \circ \Omega \right)(A)
  = (1/2)^{|\beta|_{\shortparallel}}
    \cdot \left\langle \partial^{\beta_{\flat}}g\:,\phi_{\Omega(A)}\right\rangle_{\Schwartz',\Schwartz}
  \qquad \forall \: A \in U = \Omega^{-1}(\pos) \:
           \forall \: \beta\in\N_{0}^{I}.
  \label{eq:PriceIdentity}
\end{equation}
Here $\partial^{\beta_{\flat}}g$ denotes the usual distributional derivative of $g$.
\end{thm}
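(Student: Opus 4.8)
The plan is to reduce everything to two ingredients: a pointwise differential identity relating the $\Sigma$-derivatives of the Gaussian density $\phi_\Sigma$ to its spatial derivatives, and the smoothness of the map $A\mapsto\phi_{\Omega(A)}$ as a map into the Fréchet space $\Schwartz(\R^n)$, which is what licenses differentiating under the dual pairing. Granting these, the identity \eqref{eq:PriceIdentity} falls out of a short induction on $\left|\beta\right|$.

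First I would establish the key identity
\[
\partial_{A_{i,j}}\phi_{\Omega(A)}=
\begin{cases}
\tfrac{1}{2}\,\partial_{i}^{2}\phi_{\Omega(A)}, & i=j,\\
\partial_{i}\partial_{j}\phi_{\Omega(A)}, & i<j,
\end{cases}
\]
where $\partial_i,\partial_j$ act in the spatial variable $x$. The quickest route is via the Fourier transform: the characteristic function is $\widehat{\phi_\Sigma}(\xi)=e^{-\frac{1}{2}\left\langle\xi,\Sigma\xi\right\rangle}$, and differentiating it in a free entry $\Sigma_{k\ell}$ produces the factor $-\tfrac{1}{2}\xi_k\xi_\ell$, corresponding to $\tfrac{1}{2}\partial_k\partial_\ell$ on the spatial side. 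Running the chain rule through $\Omega$ then gives the stated identity: varying $A_{i,j}$ with $i<j$ moves both the $(i,j)$ and $(j,i)$ entries of $\Sigma$, which doubles the off-diagonal contribution and cancels the $\tfrac{1}{2}$, whereas varying $A_{i,i}$ moves only $\Sigma_{ii}$ and keeps it. This also explains the origin of the factor $(1/2)^{\left|\beta\right|_{\shortparallel}}$.

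Second, and this is the main obstacle, I would show that $\Psi:U\to\Schwartz(\R^n),\,A\mapsto\phi_{\Omega(A)}$ is smooth, with its $\Schwartz$-valued derivatives agreeing with the pointwise ones. Fixing a compact neighbourhood $K\subset\pos$ of a given point, the eigenvalues of $\Sigma^{-1}$ are bounded below uniformly on $K$, so $\left\langle x,\Sigma^{-1}x\right\rangle\geq c\left|x\right|^2$ with $c>0$ independent of $\Sigma\in K$. Since $\Sigma\mapsto\Sigma^{-1}$ and $\Sigma\mapsto(\det\Sigma)^{-1/2}$ are smooth on $\pos$, every mixed $x$- and $A$-derivative of $\phi_{\Omega(A)}(x)$ has the form $P(x,A)\,\phi_{\Omega(A)}(x)$ with $P$ polynomial in $x$ and smooth in $A$; the uniform lower bound then controls all Schwartz seminorms uniformly over $K$. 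A Taylor-with-remainder estimate using this uniform control shows that the difference quotients converge in $\Schwartz(\R^n)$ and that the resulting derivative depends continuously on $A$; iterating yields $\Psi\in C^\infty(U;\Schwartz(\R^n))$. The interchange-of-limits bookkeeping needed to turn the pointwise identity above into an identity in $\Schwartz(\R^n)$ is the step that requires real care.

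Finally I would assemble the argument. As $g:\Schwartz(\R^n)\to\Compl$ is continuous and linear and $\Psi$ is smooth into $\Schwartz(\R^n)$, the composition $\Phi_g\circ\Omega=\left\langle g,\Psi(\cdot)\right\rangle_{\Schwartz',\Schwartz}$ is smooth, and one may differentiate under the pairing, $\partial_{A_{i,j}}\left\langle g,\phi_{\Omega(A)}\right\rangle=\left\langle g,\partial_{A_{i,j}}\phi_{\Omega(A)}\right\rangle$. Inserting the first identity and moving the spatial derivatives onto $g$ via the definition of the distributional derivative proves the case $\left|\beta\right|=1$; note that no sign appears, since $\left|\beta_{\flat}\right|=2\left|\beta\right|$ is always even and hence $(-1)^{\left|\beta_{\flat}\right|}=1$. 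For the inductive step I would apply $\partial_{A_{i,j}}$ to the right-hand side of \eqref{eq:PriceIdentity}, use that $\partial^{\beta_\flat}g$ is again a tempered distribution to repeat the differentiation-under-the-pairing and the same identity, and verify that incrementing $\beta$ at $(i,j)$ updates $\beta_\flat$ to $\beta_\flat+e_i+e_j$ and raises $\left|\beta\right|_{\shortparallel}$ by $\delta_{ij}$, so that both sides advance consistently. This closes the induction and establishes \eqref{eq:PriceIdentity}.
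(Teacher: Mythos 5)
Your proposal is correct in outline, but it takes a genuinely different route from the paper's proof. The paper never touches vector-valued calculus: it invokes the structure theorem for tempered distributions to write $\Fourier g=\sum_{\ell}\partial^{\alpha_{\ell}}f_{\ell}$ with continuous, polynomially bounded $f_{\ell}$, reduces by linearity to $g=\Fourier^{-1}\left(\partial^{\alpha}f\right)$, and thereby converts the pairing $\left\langle g,\phi_{\Sigma}\right\rangle _{\Schwartz',\Schwartz}$ into an honest Lebesgue integral $\left(-1\right)^{\left|\alpha\right|}\left(2\pi\right)^{-n}\int f\left(\xi\right)\,\partial^{\alpha}\psi_{\Sigma}\left(\xi\right)d\xi$ against the explicit function $\psi_{\Sigma}\left(\xi\right)=e^{-\frac{1}{2}\left\langle \xi,\Sigma\xi\right\rangle }$; the scalar theorem on differentiation under the integral sign, with a Gaussian dominating function uniform over a compact ball in $U$, then does all the analytic work. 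You instead keep $g$ abstract and shift the entire burden onto the map $\Psi:A\mapsto\phi_{\Omega\left(A\right)}$, proving it is smooth as a map into the Fr\'echet space $\Schwartz\left(\R^{n}\right)$, so that continuity and linearity of $g$ license differentiation under the dual pairing; combined with the heat-equation-type identity $\partial_{A_{i,j}}\phi_{\Omega\left(A\right)}=\tfrac{1}{2}\partial_{i}^{2}\phi_{\Omega\left(A\right)}$ for $i=j$, resp.\ $\partial_{i}\partial_{j}\phi_{\Omega\left(A\right)}$ for $i<j$, a short induction finishes the argument. Both proofs share the same computational core on the Fourier side---it is exactly that computation which produces the factor $\left(1/2\right)^{\left|\beta\right|_{\shortparallel}}$ and the monomial $\xi^{\beta_{\flat}}$, and the evenness $\left|\beta_{\flat}\right|=2\left|\beta\right|$ that kills all signs, a point you correctly make. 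What your route buys: no structure theorem, no reduction to a special form of $g$, and a conceptually transparent mechanism (pair a smooth $\Schwartz$-valued map with a fixed distribution). What it costs: the smoothness of $\Psi$ in the $\Schwartz$ topology---convergence of difference quotients in every Schwartz seminorm via Taylor remainders dominated uniformly on compact subsets of $U$, together with transferring the pointwise Fourier-side identity to the spatial side through $\Fourier^{-1}$---is the analogue of, and no easier than, the paper's domination argument; you rightly flag it as the step requiring real care, but in your write-up it remains a sketch, so to obtain a complete proof this lemma (including the precise notion of $C^{\infty}$ for Fr\'echet-space-valued maps you are using) must be carried out in full.
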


\begin{rem*}
Note that even if one is in the setting of Equation~\eqref{eq:PriceFunctionAsExpectation}
where $g : \R^{n} \to \Compl$ is of moderate growth, so that
$\Phi_{g}(\Sigma) = \mathbb{E} \left[g(X_{\Sigma})\right]$
is a ``classical'' expectation, it need \emph{not} be the case that
the derivative $\partial^{\beta_{\flat}}g$ is given by a \emph{function},
let alone one of moderate growth.
Therefore, it really is useful to consider the formalism of (tempered) distributions.
\end{rem*}

\subsection{Relevance of Price's theorem}

An important application of Price's theorem is as follows:
For certain values of the covariance matrix $\Sigma$, it is usually easy to precisely
calculate the expectation $\mathbb{E}\left[ g(X_{\Sigma}) \right]$---for example
if $\Sigma$ is a diagonal matrix, in which case the entries of $X_\Sigma$ are independent.
As a complement to such special cases where explicit calculations are possible,
Price's theorem can be used to obtain (bounds for) the partial derivatives of the map
$\Sigma \mapsto \mathbb{E}\left[ g(X_{\Sigma}) \right]$.
In combination with standard results from multivariable calculus,
one can then obtain bounds for $\mathbb{E} \left[g(X_{\Sigma})\right]$
for general covariance matrices $\Sigma$.
Thus, \emph{Price's theorem is a tool for estimating the expectation of a nonlinear function
$g(X_{\Sigma})$ of a Gaussian random vector $X_{\Sigma}$}, even if the entries of
$X_{\Sigma}$ are correlated.

An example for this type of reasoning will be given in Section~\ref{sec:MartinExample}.
There, we apply our version of Price's theorem to show that if $f (x) = f_\tau (x)$
``clips'' $x \in \R$ to the interval $[-\tau, \tau]$ and if
$(X_\alpha, Y_\alpha) \sim N(0, \Sigma_\alpha)$ for
$\Sigma_\alpha = \left( \begin{smallmatrix} 1 & \alpha \\ \alpha & 1 \end{smallmatrix} \right)$,
then the map $F_\tau : [0,1] \to \R, \alpha \mapsto \expectation[f_\tau(X_\alpha) f_\tau(Y_\alpha)]$
is \emph{convex} and satisfies $F_\tau(0) = 0$.
Thus, $F_\tau(\alpha) \leq \alpha \cdot F_\tau (1)$, where $F_\tau(1)$ is easy to bound
since $X_1 = Y_1$ almost surely.
These facts constitute important ingredients in \cite{MartinAnalysisRecovery};
see Theorem~A.4 and the proof of Lemma~A.3 in that paper.

\section{Comparison with the classical results}
\label{sec:Comparison}

The original form of Price's theorem as stated in \cite{PriceTheorem}
only concerns the case when the nonlinearity $g(x) = g_{1}(x_{1}) \cdots g_{n} (x_{n})$
has a tensor-product structure.
In this special case, the formula derived in \cite{PriceTheorem} is identical
to the one given by Theorem~\ref{thm:PriceTheorem}, up to notational differences.

This tensor-product structure assumption concerning $g$ was removed
by McMahon \cite{PriceTheoremExtension} and Papoulis \cite{PapoulisCommentOnAnExtensionOfPrice}
in the case of Gaussian random vectors of dimension $n = 2$ with covariance matrix of the form
\({
  \Sigma
  = \Sigma_{\alpha}
  = \left(
      \begin{smallmatrix}
        1      & \alpha \\
        \alpha & 1
      \end{smallmatrix}
    \right)
}\)
with $\alpha \in( -1,1)$.
Precisely, if $X_{\alpha} \sim N(0,\Sigma_{\alpha})$,
then \cite{PriceTheoremExtension} states for $g : \R^{2} \to \Compl$ that
\begin{equation}
  \Theta_{g} :
  (-1,1) \to \Compl,
  \alpha \mapsto \mathbb{E}\left[g(X_{\alpha})\right]
  \quad \text{ is smooth with } \quad
  \Theta_{g}^{(n)}(\alpha)
  = \mathbb{E}\left[
                \frac{\partial^{2n}g}{\partial x_{1}^{n}\partial x_{2}^{n}} (X_{\alpha})
              \right].
  \label{eq:McMahon}
\end{equation}

Based on the work by Papoulis, Brown \cite{BrownGeneralizedFormOfPricesTheorem}
showed that Price's theorem holds for Gaussian random vectors $X$ of general dimensionality
and unit variance $\Sigma_{i,i} = \expectation[(X_\Sigma)_i^2] = 1$,
if one takes derivatives with respect to the covariances
$\Sigma_{i,j} = \expectation[(X_\Sigma)_i \, (X_\Sigma)_j]$ where $i \neq j$.
In this setting, Brown also showed that Price's theorem \emph{characterizes the normal distribution};
more precisely, if $(X_\Sigma)_{\Sigma}$ is a (sufficiently nice)
family of random vectors with $\mathrm{Cov}(X_\Sigma) = \Sigma$ which satisfies the conclusion
of Price's theorem, then $X_\Sigma \sim N(0,\Sigma)$ is necessarily normally distributed.
This extends and corrects the original work of Price \cite{PriceTheorem},
where a similar claim was made.

Finally, we mention the article \cite{PriceQuantumMechanical}
in which a quantum-mechanical version of Price's theorem is established.
In Section~II of that paper, the author reviews the ``classical''
case of Price's theorem, and essentially derives the same formulas
as in Theorem~\ref{thm:PriceTheorem}.

\medskip{}

Despite their great utility, the existing versions of Price's theorem
have some shortcomings---at least from a mathematical perspective:

\begin{itemize}
  \item In \cite{PriceTheorem,PriceTheoremExtension,BrownGeneralizedFormOfPricesTheorem},
        the assumptions regarding the functions $g_{1},\dots,g_{n}$ or $g$ are never made explicit.
        In particular, it is assumed in \cite{PriceTheorem,PriceTheoremExtension}
        without justification that $g_{1},\dots,g_{n}$ or $g$ can be represented
        as the sum of certain Laplace transforms.
        Likewise, Papoulis \cite{PapoulisCommentOnAnExtensionOfPrice} assumes that $g$
        satisfies the decay condition $|g(x,y)| \lesssim e^{|(x,y)|^\beta}$ for some $\beta < 2$,
        but does not impose \emph{any} restrictions on the regularity of $g$.
        Finally, \cite{BrownGeneralizedFormOfPricesTheorem} is mainly concerned with showing
        that Price's theorem \emph{only} holds for normally distributed random vectors,
        and simply refers to \cite{PapoulisCommentOnAnExtensionOfPrice} for the proof that
        Price's theorem does indeed hold for normal random vectors.

        None of the papers
        \cite{PriceTheorem,PriceTheoremExtension,PapoulisCommentOnAnExtensionOfPrice,BrownGeneralizedFormOfPricesTheorem},
        explains the nature of the derivative of $g$ (classical, distributional, etc.)
        which appears in the derived formula.

  \item In contrast, for calculating the $k$-th order derivatives of
        $\Sigma \mapsto \expectation[g(X_\Sigma)]$, it is assumed in \cite{PriceQuantumMechanical}
        that the nonlinearity $g$ is $C^{2k}$, with a certain decay condition concerning
        the derivatives.
        This \emph{classical} smoothness of $g$, however, does not hold
        in many applications; see Section~\ref{sec:MartinExample}.
\end{itemize}

Differently from
\cite{PriceTheorem,PriceTheoremExtension,PapoulisCommentOnAnExtensionOfPrice,
BrownGeneralizedFormOfPricesTheorem,PriceQuantumMechanical},
our version of Price's theorem imposes precise, rather mild assumptions
concerning the nonlinearity $g$ (namely $g \in \Schwartz' (\R^{n})$)
and precisely explains the nature of the derivative $\partial^{\beta_{\flat}}g$
that appears in the theorem statement: this is just a distributional derivative.

Furthermore, maybe as a consequence of the preceding points, it seems
that Price's theorem is not as well-known in the mathematical community as it deserves to be.
\emph{It is my hope that the present paper may promote this result}.

Before closing this section, we prove that---assuming $g$ to be a tempered distribution---the result
of \cite{PriceTheoremExtension,PapoulisCommentOnAnExtensionOfPrice}
is indeed a special case of Theorem~\ref{thm:PriceTheorem}.
With similar arguments, one can show that the forms of Price's theorem considered in
\cite{PriceTheorem,PriceQuantumMechanical,BrownGeneralizedFormOfPricesTheorem}
are covered by Theorem~\ref{thm:PriceTheorem} as well.

\begin{cor}\label{cor:GeneralPriceTheoremTwoDimensional}
  Let $g \in \Schwartz' (\R^{2})$.
  For $\alpha \in (-1,1)$, let
  \(
    \Sigma_{\alpha}
    := \left(
         \begin{smallmatrix}
           1      & \alpha \\
           \alpha & 1
         \end{smallmatrix}
       \right)
  \).
  Let
  \[
    \Theta_{g} :
    (-1,1) \to \Compl,
    \quad
    \alpha \mapsto \langle g,\, \phi_{\Sigma_{\alpha}} \rangle_{\Schwartz',\Schwartz},
  \]
  where $\phi_{\Sigma_{\alpha}} : \R^2 \to (0,\infty)$
  denotes the probability density function of $X_{\alpha}\sim N(0,\Sigma_{\alpha})$.

  Then $\Theta_{g}$ is smooth with $n$-th derivative
  \(
    \Theta_{g}^{(n)} (\alpha)
    = \left\langle
        \frac{\partial^{2n}g}{\partial x_{1}^{n}\partial x_{2}^{n}},\,
        \phi_{\Sigma_{\alpha}}
      \right\rangle_{\Schwartz',\Schwartz}
  \)
  for $\alpha \in (-1,1)$.
\end{cor}

\begin{rem*}
In particular, if both $g$ and the (distributional) derivative
$\frac{\partial^{2n}g}{\partial x_{1}^{n}\partial x_{2}^{n}}$
are given by functions of moderate growth, then Equation~\eqref{eq:McMahon} holds, i.e.,
\[
  \frac{d^{n}}{d\alpha^{n}}
  \mathbb{E}\bigl[\, g(X_{\alpha}) \,\bigr]
  =\mathbb{E} \left[
                \frac{\partial^{2n}g}{\partial x_{1}^{n}\partial x_{2}^{n}}(X_{\alpha})
              \right].
  \qedhere
\]
\end{rem*}

\begin{proof}[Proof of Corollary~\ref{cor:GeneralPriceTheoremTwoDimensional}]
In the notation of Theorem~\ref{thm:PriceTheorem}, we have
\[
  \Theta_{g}(\alpha)
  = \left( \Phi_{g} \circ \Omega \right)\bigl(A^{(\alpha)}\bigr)
  \,\,\,\,\text{with}\,\,\,\,
  A_{i,j}^{(\alpha)}
  = \begin{cases}
      1,      & \text{if } i = j,   \\
      \alpha, & \text{if } i \neq j
    \end{cases}
  \,\,\,\,\text{for}\,\,\,\,
  (i,j) \in I = \left\{ (1,1), (1,2), (2,2) \right\} \! .
\]
Since $\Omega\left(A^{(\alpha)}\right) = \Sigma_{\alpha}$
is easily seen to be positive definite, we have $A^{(\alpha)} \in U$.
Now, setting ${\beta := n \cdot e_{(1,2)} \in \N_{0}^{I}}$
(with the standard basis $e_{(1,1)},e_{(1,2)},e_{(2,2)}$ of $\R^{I}$),
the flattened version $\beta_{\flat}$ of $\beta$ satisfies $\beta_{\flat} = n e_1 + n e_2 = (n,n)$.
Thus, Theorem~\ref{thm:PriceTheorem} and the chain-rule show that $\Theta_{g}$ is smooth, with
\begin{align*}
  \Theta_{g}^{(n)}(\alpha)
  & = \frac{d^{n}}{d\alpha^{n}} \left[
                                  (\Phi_{g}\circ\Omega)\bigl(A^{(\alpha)}\bigr)
                                \right]
    = \left[ \partial^{\beta}(\Phi_{g} \circ \Omega) \right]\bigl(A^{(\alpha)}\bigr) \\
  & = \left\langle
        \partial^{\beta_{\flat}}g\:,
        \phi_{\Omega(A^{(\alpha)})}
      \right\rangle_{\Schwartz',\Schwartz}
    = \left\langle
        \frac{\partial^{2n}g}{\partial x_{1}^{n}\partial x_{2}^{n}},\:
        \phi_{\Sigma_{\alpha}}
      \right\rangle_{\Schwartz',\Schwartz}.
  \qedhere
\end{align*}
\end{proof}

\section{An example of an application of Price's theorem}
\label{sec:MartinExample}

In this section, we derive bounds for the expectation
$\mathbb{E}\left[g(X_{\alpha},Y_{\alpha})\right]$, where $X_{\alpha},Y_{\alpha}$ follow
a joint normal distribution with covariance matrix
\(
  \left(
    \begin{smallmatrix}
      1      & \alpha \\
      \alpha & 1
    \end{smallmatrix}
  \right)
  ,
\)
and where the nonlinearity $g = g_{\tau}$ is just a componentwise
truncation (or \emph{clipping}) to the interval $[-\tau,\tau]$.
We remark that this example has already been considered by Price \cite{PriceTheorem} himself,
but that his arguments are not completely mathematically
rigorous, as explained in Section~\ref{sec:Comparison}.
Precisely, we obtain the following result:

\begin{lem}\label{lem:Example}
Let $\tau > 0$ be arbitrary, and define
\[
  f_{\tau}: \R \to \R,
  x \mapsto \begin{cases}
              \tau,  & \text{if } x \geq \tau,\\
              x,     & \text{if } x \in [-\tau,\tau],\\
              -\tau, & \text{if } x \leq -\tau.
            \end{cases}
\]
For $\alpha\in[-1,1]$, set
\(
  \Sigma_{\alpha}
  := \left(
      \begin{smallmatrix}
        1      & \alpha \\
        \alpha & 1
      \end{smallmatrix}
    \right)
,
\)
and let $(X_{\alpha},Y_{\alpha}) \sim N(0,\Sigma_{\alpha})$.
Finally, define
\[
  F_{\tau} : [-1,1] \to \R^{2}, \qquad
  \alpha \mapsto \mathbb{E} \big[f_{\tau}(X_{\alpha}) \cdot f_{\tau}(Y_{\alpha}) \big].
\]
Then $F_{\tau}$ is continuous and $F_{\tau}|_{[0,1]}$ is convex with $F_{\tau}(0) = 0$.
In particular, $F_{\tau}(\alpha) \leq \alpha \cdot F_{\tau}(1)$ for all $\alpha \in [0,1]$.
\end{lem}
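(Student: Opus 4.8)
The plan is to reduce all three assertions to two applications of Corollary~\ref{cor:GeneralPriceTheoremTwoDimensional}, together with an elementary coupling argument that covers the degenerate endpoints $\alpha=\pm1$. Set $g:=f_{\tau}\otimes f_{\tau}$, i.e.\@ $g\left(x_{1},x_{2}\right)=f_{\tau}\left(x_{1}\right)f_{\tau}\left(x_{2}\right)$. Since $\left|f_{\tau}\right|\leq\tau$, the function $g$ is bounded, hence of moderate growth, so $g\in\Schwartz'\left(\R^{2}\right)$ and $F_{\tau}\left(\alpha\right)=\Theta_{g}\left(\alpha\right)$ in the notation of the corollary for every $\alpha\in\left(-1,1\right)$.

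First I would establish continuity on the \emph{closed} interval $\left[-1,1\right]$, which is the only place where the degeneracy of $\Sigma_{\pm1}$ prevents a direct appeal to Price's theorem. To this end I would realize all pairs $\left(X_{\alpha},Y_{\alpha}\right)$ on a single probability space by setting $X_{\alpha}:=U$ and $Y_{\alpha}:=\alpha U+\sqrt{1-\alpha^{2}}\,V$, where $U,V$ are independent standard normal variables; a direct computation shows this has the correct covariance $\Sigma_{\alpha}$ for all $\alpha\in\left[-1,1\right]$. Then $\alpha\mapsto\left(X_{\alpha},Y_{\alpha}\right)\left(\omega\right)$ is continuous for every $\omega$, so by continuity of $f_{\tau}$ and the uniform bound $\left|f_{\tau}\left(X_{\alpha}\right)f_{\tau}\left(Y_{\alpha}\right)\right|\leq\tau^{2}$, dominated convergence yields continuity of $F_{\tau}$ on all of $\left[-1,1\right]$. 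The same coupling gives $F_{\tau}\left(0\right)=\mathbb{E}\left[f_{\tau}\left(U\right)\right]\cdot\mathbb{E}\left[f_{\tau}\left(V\right)\right]=0$, since $f_{\tau}$ is odd and $U,V$ are symmetric.

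For convexity I would apply Corollary~\ref{cor:GeneralPriceTheoremTwoDimensional} with $n=2$, which yields $F_{\tau}''\left(\alpha\right)=\left\langle \frac{\partial^{4}g}{\partial x_{1}^{2}\partial x_{2}^{2}},\,\phi_{\Sigma_{\alpha}}\right\rangle _{\Schwartz',\Schwartz}$ on $\left(-1,1\right)$. The decisive computation is the distributional one: since $f_{\tau}$ is Lipschitz with $f_{\tau}'=\Indicator_{\left(-\tau,\tau\right)}$, its second distributional derivative is the measure $f_{\tau}''=\delta_{-\tau}-\delta_{\tau}$, whence $\frac{\partial^{4}g}{\partial x_{1}^{2}\partial x_{2}^{2}}=\left(\delta_{-\tau}-\delta_{\tau}\right)\otimes\left(\delta_{-\tau}-\delta_{\tau}\right)$. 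Evaluating this tensor product of point masses against the Schwartz function $\phi_{\Sigma_{\alpha}}$ and using the evenness of the density, I expect
\[
F_{\tau}''\left(\alpha\right)=2\bigl[\phi_{\Sigma_{\alpha}}\left(\tau,\tau\right)-\phi_{\Sigma_{\alpha}}\left(\tau,-\tau\right)\bigr].
\]
Inserting the explicit formula for $\phi_{\Sigma_{\alpha}}$ reduces the two exponents to $-\tau^{2}/\left(1+\alpha\right)$ and $-\tau^{2}/\left(1-\alpha\right)$ respectively, so $F_{\tau}''\left(\alpha\right)\geq0$ precisely when $\alpha\geq0$. Hence $F_{\tau}$ is convex on $\left[0,1\right)$, and the continuity at $\alpha=1$ established above upgrades this to convexity on the closed interval $\left[0,1\right]$.

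Finally, the displayed inequality is immediate: convexity on $\left[0,1\right]$ together with $F_{\tau}\left(0\right)=0$ gives $F_{\tau}\left(\alpha\right)=F_{\tau}\bigl(\left(1-\alpha\right)\cdot0+\alpha\cdot1\bigr)\leq\left(1-\alpha\right)F_{\tau}\left(0\right)+\alpha F_{\tau}\left(1\right)=\alpha F_{\tau}\left(1\right)$. I expect the main obstacle to be the distributional derivative step: the whole point of the example is that $f_{\tau}''$ is \emph{not} a function but the measure $\delta_{-\tau}-\delta_{\tau}$, so the reduction to evaluating $\phi_{\Sigma_{\alpha}}$ at the four corners $\left(\pm\tau,\pm\tau\right)$ genuinely relies on the distributional formulation of Corollary~\ref{cor:GeneralPriceTheoremTwoDimensional}; a purely classical version of Price's theorem would not apply here. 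The secondary technical point—continuity at the degenerate endpoints $\alpha=\pm1$, where $\Sigma_{\alpha}$ fails to be positive definite—is handled cleanly by the explicit coupling.
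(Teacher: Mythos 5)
Your proposal is correct and follows essentially the same route as the paper's proof: both apply Corollary~\ref{cor:GeneralPriceTheoremTwoDimensional} to $g=f_{\tau}\otimes f_{\tau}$, identify $\frac{\partial^{4}g}{\partial x_{1}^{2}\partial x_{2}^{2}}$ as a signed combination of the four point masses at $\left(\pm\tau,\pm\tau\right)$, deduce $F_{\tau}''\geq0$ on $\left[0,1\right)$ from the explicit Gaussian density, and handle continuity on $\left[-1,1\right]$ together with $F_{\tau}\left(0\right)=0$ via the coupling $Y_{\alpha}=\alpha X+\sqrt{1-\alpha^{2}}\,Z$. The only immaterial differences are that you obtain the fourth derivative cleanly as the tensor product $\left(\delta_{-\tau}-\delta_{\tau}\right)\otimes\left(\delta_{-\tau}-\delta_{\tau}\right)$ where the paper sketches a Fubini/fundamental-theorem argument, and you use dominated convergence for continuity where the paper uses an explicit Lipschitz estimate.
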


\begin{proof}
It is easy to see that $f_{\tau}$ is bounded and Lipschitz continuous,
so that $f_{\tau}\in W^{1,\infty}(\R)$ with weak derivative $f_{\tau}' = \Indicator_{(-\tau,\tau)}$.
Therefore, using the notation $(g \otimes h)(x,y) = g(x) \, h(y)$, we see that
$g_{\tau} := f_{\tau} \otimes f_{\tau}\in W^{1,\infty}(\R^{2}) \subset \Schwartz'(\R^{2})$,
with weak derivative
\(
  \frac{\partial^{2}g_{\tau}}{\partial x_{1}\partial x_{2}}
  = \Indicator_{(-\tau,\tau)} \otimes \Indicator_{(-\tau,\tau)}
  = \Indicator_{(-\tau,\tau)^{2}}
  .
\)
Directly from the definition of the weak derivative, in combination
with Fubini's theorem and the fundamental theorem of calculus, we thus see
for each $\phi \in \Schwartz(\R^2)$ that
\begin{align*}
  \left\langle
    \frac{\partial^4 \, g_\tau}{\partial x_1^2 \partial x_2^2} ,\,\,
    \phi
  \right\rangle_{\Schwartz',\Schwartz}
  & = \left\langle
        \frac{\partial^2 \, g_\tau}{\partial x_1 \partial x_2} ,\,\,
        \frac{\partial^2 \phi}{\partial x_1 \partial x_2}
      \right\rangle_{\Schwartz',\Schwartz}
    = \int_{-\tau}^\tau
        \int_{-\tau}^\tau
          \Big( \frac{\partial^2 \phi}{\partial x_1 \partial x_2} \Big)(t_1, t_2)
        \, d t_1
      \, d t_2 \\
  & = \int_{-\tau}^\tau
        \Big( \frac{\partial \phi}{\partial x_2} \Big) (\tau, t_2)
        - \Big( \frac{\partial \phi}{\partial x_2} \Big) (-\tau, t_2)
      \, d t_2 \\
  & = \phi(\tau,\tau) - \phi(-\tau,\tau) - \phi(\tau,-\tau) + \phi(-\tau,-\tau) .
\end{align*}

Now, Corollary~\ref{cor:GeneralPriceTheoremTwoDimensional} shows
that $F_{\tau}|_{(-1,1)} = \Theta_{g_{\tau}}$ is smooth with
\[
  F_{\tau}'' (\alpha)
  = \left\langle
      \frac{\partial^{4} g_{\tau}}{\partial x_{1}^{2}\partial x_{2}^{2}} ,\:
      \phi_{\Sigma_{\alpha}}
    \right\rangle_{\Schwartz',\Schwartz}
  = \phi_{\Sigma_{\alpha}}(\tau,\tau)
    - \phi_{\Sigma_{\alpha}}(-\tau,\tau)
    - \phi_{\Sigma_{\alpha}}(\tau,-\tau)
    + \phi_{\Sigma_{\alpha}}(-\tau,-\tau)
\]
for $\alpha\in(-1,1)$.
We want to show $F_{\tau}''(\alpha) \geq 0$ for $\alpha \in [0,1 )$.
Since $\phi_{\Sigma_{\alpha}}$ is symmetric, it suffices to show
$\phi_{\Sigma_{\alpha}}(\tau,\tau)-\phi_{\Sigma_{\alpha}}(-\tau,\tau) \geq 0$,
which is easily seen to be equivalent to
\begin{align*}
  & \quad
    \exp\left( -\frac{1}{2(1-\alpha^{2})}(2\tau^{2} - 2\alpha\tau^{2}) \right)
    \overset{!}{\geq} \exp\left( -\frac{1}{2(1-\alpha^{2})}(2\tau^{2} + 2\alpha\tau^{2}) \right)\\
  \Longleftrightarrow
  & \quad
    2\tau^{2} + 2\alpha\tau^{2} \overset{!}{\geq} 2\tau^{2} - 2\alpha\tau^{2} 
    \qquad
  \Longleftrightarrow
    \qquad
    4 \alpha \tau^{2} \overset{!}{\geq} 0,
\end{align*}
which clearly holds for $\alpha \in [0,1)$.

To finish the proof, we only need to show that $F_{\tau}$ is continuous with $F_{\tau}(0) = 0$.
To see this, let $(X,Z)\sim N(0,I_{2})$, with the $2$-dimensional identity matrix $I_{2}$.
For $\alpha \in [-1,1]$, it is then not hard to see that
$Y_{\alpha} := \alpha X + \sqrt{1-\alpha^{2}}Z$ satisfies $(X,Y_{\alpha}) \sim N(0,\Sigma_{\alpha})$.
Therefore, we see for $\alpha, \beta \in [-1,1]$ that
\begin{align*}
  \left| F_{\tau}(\alpha) - F_{\tau}(\beta) \right|
  & = \big|
        \mathbb{E}\left[g_{\tau}(X,Y_{\alpha})\right]
        - \mathbb{E}\left[g_{\tau}(X,Y_{\beta})\right]
      \big|
    = \big|
        \expectation \big[
                       f_\tau (X) \cdot \bigl( f_\tau(Y_\alpha) - f_\tau (Y_\beta) \bigr)
                     \big]
      \big| \\
  \left({\scriptstyle \text{since }\left|f_{\tau}\left(X\right)\right|\leq\tau}\right)
  & \leq \tau \cdot \mathbb{E} \left| f_{\tau}(Y_{\alpha}) - f_{\tau}(Y_{\beta}) \right| \\
  \left({\scriptstyle \text{since }f_{\tau}\text{ is }1\text{-Lipschitz}}\right)
  & \leq \tau \cdot \mathbb{E} \left| Y_{\alpha} - Y_{\beta} \right| \\
  & \leq \tau \cdot | \alpha - \beta | \cdot \mathbb{E}|X|
         + \tau \cdot \left| \sqrt{1-\alpha^{2}} - \sqrt{1-\beta^{2}} \right| \cdot \mathbb{E} |Z|
    \xrightarrow[\beta \to \alpha]{} 0,
\end{align*}
which shows that $F_{\tau}$ is indeed continuous.
Furthermore, we see by independence of $X,Z$ that
\[
  F_{\tau}(0)
  =\mathbb{E}[f_{\tau}(X)\cdot f_{\tau}(Z)]
  =\mathbb{E}[f_{\tau}(X)] \cdot \mathbb{E}[f_{\tau}(Z)]
  =0,
\]
since
\(
  \mathbb{E}[f_{\tau}(X)]
  = -\mathbb{E}[f_{\tau}(-X)]
  = -\mathbb{E}[f_{\tau}(X)]
  ,
\)
because of $X \sim -X$ and $f_{\tau}(x) = - f_{\tau}(-x)$ for $x \in \R$.
\end{proof}

\section{The proof of Theorem~\ref{thm:PriceTheorem}}
\label{sec:Proof}

The main idea of the proof is to use Fourier analysis, since the Fourier transform
$\Fourier\phi_{\Sigma}$ of the density function $\phi_{\Sigma}$ will turn out to be much easier
to handle than $\phi_{\Sigma}$ itself.
This is similar to the approach in \cite{PapoulisCommentOnAnExtensionOfPrice,BrownGeneralizedFormOfPricesTheorem}
but slightly different from the approach in \cite{PriceTheorem,PriceTheoremExtension},
where the Laplace transform is used instead.

For the Fourier transform, we will use the normalization
\[
  \Fourier\varphi(\xi)
  :=\widehat{\varphi}\left(\xi\right)
  := \int_{\R^{n}}
       \varphi(x) \cdot e^{-i \langle x,\xi \rangle }
     \,dx
  \quad \text{ for } \xi \in \R^{n} \text{ and } \varphi \in L^{1}(\R^{n}).
\]

It is well-known that the restriction $\Fourier : \Schwartz(\R^{n}) \to \Schwartz(\R^{n})$
of $\Fourier$ is a well-defined homeomorphism, with inverse
$\Fourier^{-1} : \Schwartz(\R^{n}) \to \Schwartz(\R^{n})$,
where $\Fourier^{-1}\varphi(x) = (2\pi)^{-n} \cdot \Fourier\varphi(-x)$.
By duality, the Fourier transform also extends to a bijection
$\Fourier : \Schwartz'(\R^{n}) \to \Schwartz'(\R^{n})$ defined%
\footnote{This definition is motivated by the identity
\(
  \int
    \widehat{f}(x) \cdot g(x)
  dx
  = \int
      \int
        f(\xi) e^{-i\left\langle x,\xi\right\rangle } g(x)
      d\xi
    dx
  = \int f(\xi) \widehat{g}(\xi) d\xi
\)
which is valid for $f,g\in L^{1}\left(\R^{n}\right)$ thanks to Fubini's theorem.}
by
\(
  \langle \Fourier g,\,\varphi\rangle_{\Schwartz',\Schwartz}
  := \langle g,\,\Fourier\varphi\rangle_{\Schwartz',\Schwartz}
\)
for $g \in \Schwartz'(\R^{n})$ and $\varphi \in \Schwartz(\R^{n})$.
Further, it is well-known for the distributional derivatives $\partial^{\alpha} g$
of $g \in \Schwartz'(\R^{n})$ defined by
\(
  \left\langle
    \partial^{\alpha} g,\,
    \varphi
  \right\rangle_{\Schwartz',\Schwartz}
  = (-1)^{|\alpha|}
    \cdot \left\langle
            g,\,
            \partial^{\alpha} \varphi
          \right\rangle_{\Schwartz',\Schwartz}
\)
that if we set
\[
  X^{\alpha} \cdot \varphi:
  \R^{n} \to \Compl,
  x\mapsto x^{\alpha}\cdot\varphi(x)
  \quad \text{ and } \quad
  \langle
    X^{\alpha}\cdot g ,\: \varphi
  \rangle_{\Schwartz',\Schwartz}
  = \langle
      g,\,
      X^{\alpha} \cdot \varphi
    \rangle_{\Schwartz',\Schwartz}
\]
for $g \in \Schwartz'(\R^{n})$ and $\varphi \in \Schwartz(\R^{n})$, then we have
\begin{equation}
  \Fourier \left[ \partial^{\alpha} g \right]
  = i^{|\alpha|} \cdot X^{\alpha} \cdot \Fourier g
  \qquad \forall \: g \in \Schwartz'(\R^{n}), \,\, \alpha \in \N_0^n .
  \label{eq:FourierTransformDerivative}
\end{equation}
These results can be found e.g.\@ in \cite[Chapter~14]{DuistermaatDistributions},
or (with a slightly different normalization of the Fourier transform)
in \cite[Sections 8.3 and 9.2]{FollandRA}.

Finally, we will use the formula
\begin{equation}
  (2\pi)^{n} \cdot \Fourier^{-1}\phi_{\Sigma}(\xi)
  = \int_{\R^{n}} \!\!\!
      e^{i\left\langle x, \xi \right\rangle }
      \, \phi_{\Sigma}(x)
    \,dx
  = \mathbb{E}\left[e^{i\left\langle \xi,X_{\Sigma}\right\rangle }\right]
  =e^{-\frac{1}{2}\left\langle \xi,\Sigma\xi\right\rangle }
  =:\psi_{\Sigma}(\xi)
  \,\,\,\text{for}\,\,\, \xi \in \R^{n},
  \label{eq:DensityFourierTransform}
\end{equation}
which is proved in \cite[Chapter~5, Theorem~4.1]{GutIntermediateCourseInProbability};
in probabilistic terms, this is a statement about the characteristic
function of the random vector $X_{\Sigma} \sim N(0,\Sigma)$.

Next, by the assumption of Theorem~\ref{thm:PriceTheorem}, we have $g \in \Schwartz'(\R^{n})$
and hence $\Fourier g \in \Schwartz'(\R^{n})$.
Thus, by the structure theorem for tempered distributions
(see for instance \cite[Theorem~17.10]{DuistermaatDistributions}),
there are $L \in \N$, certain $\alpha_{1}, \dots, \alpha_{L} \in \N_{0}^{n}$
and certain polynomially bounded, continuous functions $f_{1}, \dots, f_{L} : \R^n \to \CC$
satisfying $\Fourier g = \sum_{\ell=1}^{L} \partial^{\alpha_{\ell}} f_{\ell}$,
i.e., $g = \sum_{\ell=1}^{L} \Fourier^{-1} ( \partial^{\alpha_{\ell}} f_{\ell} )$.
Since both sides of the target identity \eqref{eq:PriceIdentity} are linear with respect to $g$,
we can thus assume without loss of generality that $g = \Fourier^{-1}(\partial^{\alpha} f)$
for some $\alpha \in \N_{0}^{n} \vphantom{\sum_{j}}$ and some continuous $f:\R^{n} \to \Compl$
which is polynomially bounded, say $|f(\xi)| \leq C \cdot (1+|\xi|)^{N}$ for all $\xi \in \R^{n}$
and certain $C > 0$, $N \in \N_{0}$.
We thus have
\begin{equation}
\begin{split}
  \Phi_{g}(\Sigma)
  & = \left\langle g,\,\phi_{\Sigma}\right\rangle _{\Schwartz',\Schwartz}
    = \left\langle
        g,\,
        \Fourier\Fourier^{-1} \phi_{\Sigma}
      \right\rangle_{\Schwartz',\Schwartz}
    = \left\langle
        \Fourier g,\,
        \Fourier^{-1} \phi_{\Sigma}
      \right\rangle_{\Schwartz',\Schwartz} \\
  \left({\scriptstyle \text{Eq. }\eqref{eq:DensityFourierTransform}}\right)
  & = (2\pi)^{-n}
      \cdot \left\langle
              \partial^{\alpha}f,\,
              \psi_{\Sigma}
            \right\rangle_{\Schwartz',\Schwartz}\\
   & = (-1)^{|\alpha|} \cdot (2\pi)^{-n}
       \cdot \left\langle
               f,\,
               \partial^{\alpha} \psi_{\Sigma}
             \right\rangle_{\Schwartz',\Schwartz} \\
   & = (-1)^{|\alpha|} \cdot (2\pi)^{-n}
       \cdot \int_{\R^{n}}
               f(\xi) \cdot \left(\partial^{\alpha}\psi_{\Sigma}\right)(\xi)
             \,d\xi
     \quad\text{ for all } \Sigma \in \pos.
\end{split}
\label{eq:GoingToFourier}
\end{equation}
Our first goal in the remainder of the proof is to show that one can
justify ``differentiation under the integral'' with respect to $A_{i,j}$
with $\Sigma = \Omega(A)$ in the last integral in Equation~\eqref{eq:GoingToFourier}.

\medskip{}

It is easy to see that $A \mapsto \psi_{\Omega(A)}(\xi)$ is smooth, with partial derivative
\begin{align*}
  \partial_{A_{i,j}}\,\psi_{\Omega(A)}(\xi)
  & = e^{-\frac{1}{2} \left\langle \xi, \Omega(A)\xi \right\rangle}
      \cdot \partial_{A_{i,j}}
            \bigg(
              -\frac{1}{2}
              \cdot \sum_{k,\ell=1}^{n}
                    \big[
                      \Omega(A)
                    \big]_{k,\ell}
                    \cdot\xi_{k} \xi_{\ell}
            \bigg) \\
  & = \begin{cases}
        - \frac{1}{2} \cdot \xi_{i} \, \xi_{j} \cdot \psi_{\Omega(A)}(\xi), & \text{if } i = j, \\[0.1cm]
        - \xi_{i} \, \xi_{j} \, \cdot \psi_{\Omega(A)}(\xi),                & \text{if } i < j
      \end{cases}
\end{align*}
for all $\xi \in \R^{n}$ and arbitrary $(i,j) \in I$ and $A \in U$.
Given $\beta \in \N_0^I$, let us write $\partial_A^\beta$ for the partial derivative of order $\beta$
with respect to $A \in \R^I$.
Then, a straightforward induction using the preceding identity shows
(with $|\beta|_{\shortparallel}$ and $\beta_{\flat}$ as in \eqref{eq:CentralAbsoluteValue}
and \eqref{eq:FlattenedMultiindex}) that
\begin{equation}
  \partial_{A}^{\beta}\,\psi_{\Omega\left(A\right)}(\xi)
  = (-1)^{|\beta|}
    \cdot \left(\frac{1}{2}\right)^{|\beta|_{\shortparallel}}
    \cdot \xi^{\beta_{\flat}}
    \cdot \psi_{\Omega(A)}(\xi)
  \qquad \forall \,\, \beta \in \N_{0}^{I},\: \xi \in \R^{n},\: A \in U.
  \label{eq:CharacteristicFunctionDerivatives}
\end{equation}

Next, we show for arbitrary $\gamma \in \N_{0}^{n}$ that there
is a polynomial $p_{\alpha,\gamma} = p_{\alpha,\gamma}(\Xi,B)$
in the variables $\Xi \in \R^{n}$ and $B \in \R^{n\times n}$ that satisfies
\begin{equation}
  \partial_{\xi}^{\alpha}
  \big[ \xi^{\gamma} \cdot \psi_{\Sigma}(\xi) \big]
  = p_{\alpha,\gamma}(\xi,\Sigma) \cdot \psi_{\Sigma}(\xi)
  \qquad \forall \: \Sigma \in \pos ,\: \xi \in \R^{n}.
  \label{eq:XiDerivativeOfCharacteristicFunction}
\end{equation}
To see this, we first note that a direct computation using the identity
$\partial_{\xi_i} (\xi_k \, \xi_\ell) = \delta_{i,k} \, \xi_\ell + \delta_{i,\ell} \, \xi_k$
and the symmetry of $\Sigma$ shows that
$\partial_{\xi_i} \psi_{\Sigma} (\xi) = - (\Sigma \, \xi)_i \cdot \psi_\Sigma(\xi)$.
By induction, and since $(\Sigma \, \xi)_i$ is a polynomial in $\xi,\Sigma$,
we therefore see that for each $\beta \in \N_0^n$ there is a polynomial $p_\beta = p_\beta (\Xi,B)$
in the variables $\Xi \in \R^n$ and $B \in \R^{n \times n}$ satisfying
$\partial_\xi^\beta \psi_\Sigma (\xi) = \psi_\Sigma(\xi) \cdot p_\beta (\xi,\Sigma)$.
Therefore, the Leibniz rule shows
\[
  \partial_\xi^\alpha \big[ \xi^\gamma \psi_\Sigma(\xi) \big]
  = \!\!
    \sum_{\beta \in \N_0^n \text{ with } \beta \leq \alpha} \!
      \binom{\alpha}{\beta} \,
      \partial^\beta \psi_\sigma(\xi) \cdot
      \partial^{\alpha-\beta} \xi^\gamma
  = \psi_\Sigma(\xi)
    \sum_{\beta \in \N_0^n \text{ with } \beta \leq \alpha} \!
      \binom{\alpha}{\beta} \,
      p_\beta (\xi, \Sigma) \,
      \partial^{\alpha-\beta} \xi^\gamma ,
\]
which proves Equation~\eqref{eq:XiDerivativeOfCharacteristicFunction}.
\medskip{}

Now we are ready to justify differentiation under the integral
(as in \cite[Theorem~2.27]{FollandRA}) for the last integral appearing
in Equation~\eqref{eq:GoingToFourier}, with $\Sigma = \Omega(A)$, that is, for the function
\[
  U \to \CC, \qquad
  A \mapsto \int_{\R^{n}}
              f(\xi) \cdot \left( \partial^{\alpha} \psi_{\Omega(A)} \right) \! (\xi)
            \,d\xi.
\]
Indeed, let $A_{0} \in U$ be arbitrary.
Since $U$ is open, there is some $\varepsilon > 0$ satisfying
$\overline{B_{\varepsilon}}(A_{0})\subset U$, for the \emph{closed} ball
\(
  \overline{B_{\varepsilon}}(A_{0})
  = \left\{
      A \in \R^{I}
      \,:\,
      \left| A - A_{0} \right| \leq \varepsilon
    \right\}
\),
with the Euclidean norm $\left|\,\cdot\,\right|$ on $\R^{I}$.
The \emph{open} ball $B_{\varepsilon}(A_{0})$ is defined similarly.

Now, with
\[
  \sigma_{\min}(A)
  := \inf_{\substack{x \in \R^{n} \\ |x| =1 } }
       \left\langle x,\, A x \right\rangle
  \quad \text{ for } A \in \R^{n \times n}
\]
we have for $A, B \in \R^{n \times n}$ and arbitrary $x \in \R^{n}$ with $|x| = 1$ that
\[
  \sigma_{\min}(A)
  \leq \left\langle x, A x \right\rangle
  =    \left\langle x, B x \right\rangle
       + \left\langle x,(A-B) x \right\rangle
  \leq \left\langle x, B x \right\rangle
       + \left\Vert A - B\right\Vert .
\]
Since this holds for all $|x| = 1$, we get
$\sigma_{\min}(A) \leq \sigma_{\min}(B) + \left\Vert A - B \right\Vert$, and by symmetry
\(
  \left| \sigma_{\min}(A) - \sigma_{\min}(B) \right|
  \leq \left\Vert A - B\right\Vert
  .
\)
Therefore, the continuous function $A \mapsto \sigma_{\min} \big(\Omega(A)\big)$
has a positive(!) minimum on the compact set $\overline{B_{\varepsilon}}(A_{0})$,
so that $\left\langle \xi, \Omega(A) \xi \right\rangle \geq c \cdot |\xi|^{2}$
for all $\xi \in \R^{n}$ and $A \in \overline{B_{\varepsilon}}(A_{0})$, for a positive $c > 0$.
Furthermore, there is some $K = K(A_{0}) > 0$ with $\left\Vert \Omega(A) \right\Vert \leq K$ for all
$A \in \overline{B_{\varepsilon}}(A_{0})$.

Now, since the map $U \times \R^{n} \ni (A,\xi) \mapsto \psi_{\Omega(A)}(\xi) \in \Compl$
is smooth, we have (in view of Equations~\eqref{eq:CharacteristicFunctionDerivatives}
and \eqref{eq:XiDerivativeOfCharacteristicFunction}) for arbitrary
$\beta \in \N_{0}^{I}$, $A \in U$ and $\xi \in \R^{n}$ that
\begin{equation}
  \begin{split}
    \partial_{A}^{\beta}
    \left[
      f(\xi) \cdot \left( \partial^{\alpha}\psi_{\Omega(A)} \right)(\xi)
    \right]
    & = f(\xi)
        \cdot \partial_{\xi}^{\alpha}\left[\partial_{A}^{\beta} \, \psi_{\Omega(A)}(\xi) \right] \\
    \left({\scriptstyle \text{Eq. }\eqref{eq:CharacteristicFunctionDerivatives}}\right)
    & = f(\xi)
        \cdot (-1)^{|\beta|}
        \cdot \left(\frac{1}{2}\right)^{|\beta|_{\shortparallel}}
        \cdot \partial_{\xi}^{\alpha} \left[ \xi^{\beta_{\flat}} \cdot \psi_{\Omega(A)}(\xi) \right] \\
    \left({\scriptstyle \text{Eq. }\eqref{eq:XiDerivativeOfCharacteristicFunction}}\right)
    & = f(\xi)
        \cdot (-1)^{|\beta|}
        \cdot \left(\frac{1}{2}\right)^{|\beta|_{\shortparallel}}
        \cdot p_{\alpha,\beta_{\flat}} \left( \xi, \Omega(A) \right)
        \cdot \psi_{\Omega(A)}(\xi).
  \end{split}
  \label{eq:IntegralInnerDerivative}
\end{equation}

Using the polynomial growth restriction concerning $f$, we thus see
that there is a constant $C_{\alpha,\beta} > 0$ and some $M_{\alpha,\beta} \in \N$ with
\begin{align*}
  \left|
    \partial_{A}^{\beta}
    \left[
      f(\xi) \cdot \left( \partial^{\alpha} \psi_{\Omega(A)} \right)(\xi)
    \right]
  \right|
  & = \left|
        f(\xi)
        \cdot \left(\frac{1}{2}\right)^{|\beta|_{\shortparallel}}
        \cdot p_{\alpha,\beta_{\flat}}\left(\xi, \Omega(A) \right)
        \cdot \psi_{\Omega(A)}(\xi)
      \right| \\
   & \leq C
          \cdot \left( 1 \!+\! |\xi|\right)^{N}\!
          \cdot C_{\alpha,\beta}
          \cdot \left(
                  1
                  \!+\! |\xi|
                  \!+\! \left\Vert \Omega(A) \right\Vert
                \right)^{M_{\alpha,\beta}}
          \cdot e^{-\frac{1}{2} \left\langle \xi, \Omega(A) \xi \right\rangle } \\
   & \leq C_{\alpha,\beta} C
          \cdot \left( 1 + |\xi| \right)^{N}
          \cdot \left( 1 + |\xi| + K \right)^{M_{\alpha,\beta}}
          \cdot e^{-\frac{c}{2} |\xi|^{2}} \\
   & =: h_{\alpha,\beta,A_{0},f}(\xi),
\end{align*}
for all $\xi \in \R^{n}$ and all $A\in B_{\varepsilon}(A_{0})$.
Since $h_{\alpha,\beta,A_{0},f}$ is independent of $A \in B_{\varepsilon}(A_{0})$
and since we clearly have $h_{\alpha,\beta,A_{0},f} \in L^{1}(\R^{n})$,
\cite[Theorem~2.27]{FollandRA} and Equation~\eqref{eq:GoingToFourier} show that the function
\[
  B_{\varepsilon}(A_{0}) \to \Compl,
  \quad
  A \mapsto (-1)^{|\alpha|} \cdot (2\pi)^n \cdot \Phi_g (\Omega(A))
            = \int_{\R^{n}}
                f(\xi) \cdot \left( \partial^{\alpha} \psi_{\Omega(A)} \right)(\xi)
              \,d\xi
\]
is smooth, with partial derivative of order $\beta \in \N_0^I$ given by
\begin{align*}
  \partial_{A}^{\beta}
  \left[
    (-1)^{|\alpha|} \cdot (2\pi)^n \cdot \Phi_g (\Omega(A))
  \right]
  & = \int_{\R^{n}}
        \partial_A^\beta \big[ f(\xi) \, (\partial_\xi^\alpha \psi_{\Omega(A)}) (\xi) \big]
      \,d\xi \\
  \left({\scriptstyle \text{Eq. }\eqref{eq:IntegralInnerDerivative}}\right)
  & = (-1)^{|\beta|}
      \cdot \left(\frac{1}{2}\right)^{|\beta|_{\shortparallel}}
      \cdot \int_{\R^{n}}
              f(\xi)
              \cdot \partial_{\xi}^{\alpha}
                    \left(
                      \xi^{\beta_{\flat}} \cdot \psi_{\Omega(A)}(\xi)
                    \right)
            \,d\xi\\
  & = (-1)^{|\beta|}
      \cdot \left(\frac{1}{2}\right)^{|\beta|_{\shortparallel}}
      \cdot \left\langle
              f ,\,
              \partial^{\alpha} \left[ X^{\beta_{\flat}} \cdot \psi_{\Omega(A)} \right]
            \right\rangle_{\Schwartz',\Schwartz}\\
  \left({\scriptstyle \text{Eq. }\eqref{eq:DensityFourierTransform}}\right)
  & = (-1)^{|\beta| + |\alpha|}
      \cdot \left(\frac{1}{2}\right)^{|\beta|_{\shortparallel}} \!
      \cdot (2\pi)^{n}
      \cdot \left\langle
              X^{\beta_{\flat}} \cdot \partial^{\alpha} f ,\,
              \Fourier^{-1}\phi_{\Omega(A)}
            \right\rangle_{\Schwartz',\Schwartz}\\
  \left(
    {\scriptstyle
     g = \Fourier^{-1}(\partial^\alpha f),
     \text{ Eq. } \eqref{eq:FourierTransformDerivative},
     \text{ and } (-1)^{|\beta|} = \, i^{|\beta_{\flat}|} }
   \right)
   & = \left(\frac{1}{2}\right)^{|\beta|_{\shortparallel}}
       \cdot (2\pi)^{n}
       \cdot (-1)^{|\alpha|}
       \cdot \left\langle
               \Fourier[\partial^{\beta_{\flat}} \, g] ,\,
               \Fourier^{-1} \phi_{\Omega(A)}
             \right\rangle_{\Schwartz',\Schwartz}.
\end{align*}
In combination, this shows that $\Phi_{g} \circ \Omega$ is smooth on
$B_{\varepsilon}(A_{0})$, with partial derivatives given by
\[
  \partial^{\beta}\left[\Phi_{g}\circ\Omega\right](A)
  = \left( \frac{1}{2} \right)^{|\beta|_{\shortparallel}}
    \cdot \left\langle
            \Fourier \big[ \partial^{\beta_{\flat}} g \big] ,\,
            \Fourier^{-1}\phi_{\Omega(A)}
          \right\rangle_{\Schwartz',\Schwartz}
  = \left( \frac{1}{2} \right)^{|\beta|_{\shortparallel}}
    \cdot \left\langle
            \partial^{\beta_{\flat}}g ,\,
            \phi_{\Omega(A)}
          \right\rangle_{\Schwartz',\Schwartz},
\]
as claimed.
Since $A_{0} \in U$ was arbitrary, the proof is complete.
\hfill$\square$

\scriptsize{

\bibliographystyle{plain}
\bibliography{References}

}
\end{document}